% File: IVradAADMv1.tex (Nov-16-2022)

% Use this file as a template for your text.
% You may use your \newcommands placed in the marked space below
% Do not use separate files with macros

% Before processing this file, please make sure that the file
% aadmbook.cls is in the same folder with this file.
% The file aadmbook.cls can be downloaded from http://pefmath.etf.rs/aadmbook.cls

\documentclass[leqno]{aadmbook}
\usepackage{amsthm}
\usepackage{amsfonts}
\usepackage{amsmath}
\pagestyle{myheadings}

\usepackage{latexsym,amsfonts,mathptm}
\usepackage{epsfig,float,graphicx}
\restylefloat{figure} \floatplacement{figure}{htbp}
\textwidth 28cc

\def\Dj{\rlap{-}D}

\markboth{{\small\rm \hfill Jelena \Dj oki\'
c,  Ksenija Doroslova\v cki,  Olga Bodro\v{z}a-Panti\'{c}
\hfill}\hspace{-\textwidth}%
\underline{${{}_{}}_{}$\hspace{\textwidth}}}
{\underline{${{}_{}}_{}$\hspace{\textwidth}}\hspace{-\textwidth}%
{\small\rm \hfill
  The Structure of the  2-factor Transfer Digraph   for Circular  Grid Graphs
  \hfill}}

\setcounter{page}{1}
\textheight 42cc
\parskip .5mm
\parindent 2cc

\newtheorem{thm}{Theorem}
\newtheorem{lem}{Lemma}

\newtheorem{df}{Definition}

\newtheorem{exm}{Example}

\def\ds{\displaystyle}
\def\dzn{,\kern-0.1em,}

\input cyracc.def
 
%\def\cyr{\tencyr\cyracc}    % ide uz book.cls
 % ide uz amsbook.cls

\def\be{\begin{equation} }
\def\ee{\end{equation} }
\def\bfl{\begin{flushleft} }
\def\efl{\end{flushleft} }
\def\bfr{\begin{flushright} }
\def\efr{\end{flushright} }
\def\bc{\begin{center}}
\def\vs*{\vspace*}
\def\hs*{\hspace*}
\def\ec{\end{center}}
\def\beq{\begin{eqnarray}}
\def\eeq{\end{eqnarray}}

\def\ben{\begin{enumerate}}
\def\een{\end{enumerate}}
\def\bit{\begin{itemize}}
\def\eit{\end{itemize}}

\begin{document}

% Your \newcommands below (if there are any):

\oddsidemargin 16.5mm
\evensidemargin 16.5mm

\thispagestyle{plain}

%\begin{center}
%{\large \sc  Applicable Analysis and Discrete Mathematics}
%{\small available online at  http:/$\!$/pefmath.etf.rs }
%\end{center}

%\noindent{\small{\sc  Appl.\ Anal.\ Discrete Math.\ }{\bf x} (xxxx),
%xxx--xxx.} \hfill{\scriptsize doi:10.2298/AADMxxxxxxxx}

\vspace{5cc}
\begin{center}
{\large\bf  The Structure of the  2-factor Transfer Digraph  common for \\ Thin Cylinder, Torus and Klein Bottle Grid Graphs
\rule{0mm}{6mm}\renewcommand{\thefootnote}{}%Enter at least one, but not more than 3 MSCs.
% First entered MSC will be a primary one, others (at most 2) will be secondary.
\footnotetext{\scriptsize 2010 Mathematics Subject Classification.
05C38, 05C50, 05A15, 05C30, 05C85.

\rule{2.4mm}{0mm}Keywords and Phrases:  2-factor, Hamiltonian cycles, transfer matrix, grid graphs}}

\vspace{1cc} {\large\it   Jelena \Dj oki\' c, Ksenija Doroslova\v
cki   and  Olga Bodro\v{z}a-Panti\'{c}}

\vspace{1cc}
\parbox{24cc}{{\small
We prove that
 the  transfer digraph ${\cal D}^*_{C,m}$
needed for the  enumeration of 2-factors  in  the
 thin cylinder $TnC_{m}(n)$, torus $TG_{m}(n)$ and Klein bottle $KB_m(n)$
(all grid graphs of the fixed width $m$ and with $m \cdot n$ vertices),  when $m$ is odd,
  has   only two components of order $2^{m-1}$ which  are isomorphic.
When $m$  is even, ${\cal D}^*_{C,m}$ has $\ds \left\lfloor    \ds \frac{m}{2} \right\rfloor + 1$ components
 which orders can be expressed via binomial coefficients and
all but one of the components  are bipartite digraphs.
The proof is based on the application of  recently obtained
  results concerning  the related transfer digraph for  linear  grid graphs (rectangular, thick  cylinder and Moebius strip).
  }}
\end{center}

% The paper should have at least two sections

\vspace{1.5cc}
\begin{center}
{\bf  1.  INTRODUCTION}
\end{center}
\label{sec:intro}

\vspace*{4mm}

Although the research related to the enumeration of  Hamiltonian cycles on special classes of  grid graphs of fixed width, such as
Cartesian products of paths and/or cycles, was initiated  more than thirty years ago \cite{TBKS}, there still remain
 many  open  questions  whose answers  should be  sought  in the structure of so-called \emph{transfer digraphs} - auxiliary digraphs using which the counting of the required objects is performed. For more details see \cite{BKP1,BKDP1,BKDjDP}.

 2-factors are the  natural generalization of the concept of Hamiltonian cycles. For a graph $G$, \emph{ 2-factor } is defined  as a spanning subgraph of $G$
 where each vertex has exactly two neighbors. Obviously, it represents the spanning  union of cycles. In the  special case when we have just one cycle, this 2-factor is called Hamiltonian cycle. The systematic study on 2-factors of the mentioned   classes of grid graphs has recently   begun \cite{DjBD1,DjDB2,DjDB3}
in order to help in solving  the above  questions.
The first results were obtained for so-called \emph{linear grid  graphs} of width $m \in N$ (rectangular grid graphs, thick  cylinders and Moebius strips) - the grid graphs whose any subgraph  induced by the vertices from the same columns is the path $P_m$ \cite{DjBD1,DjDB2}.
The subject of interest in this paper are the grid graphs whose any subgraph  induced by the vertices from the same columns is the  cycle  $C_m$.
These are the grid graphs from the title and we refer to them as \emph{circular grid  graphs}.
Since in the process of forming a torus  grid or Klein bottle grid graph, before gluing the ends of the initial tube (thin cylinder grid), one of these ends   can be twisted,
  we obtain more types of such grid graphs.

\begin{figure}[H]
\begin{center}
\includegraphics[width=5.4in]{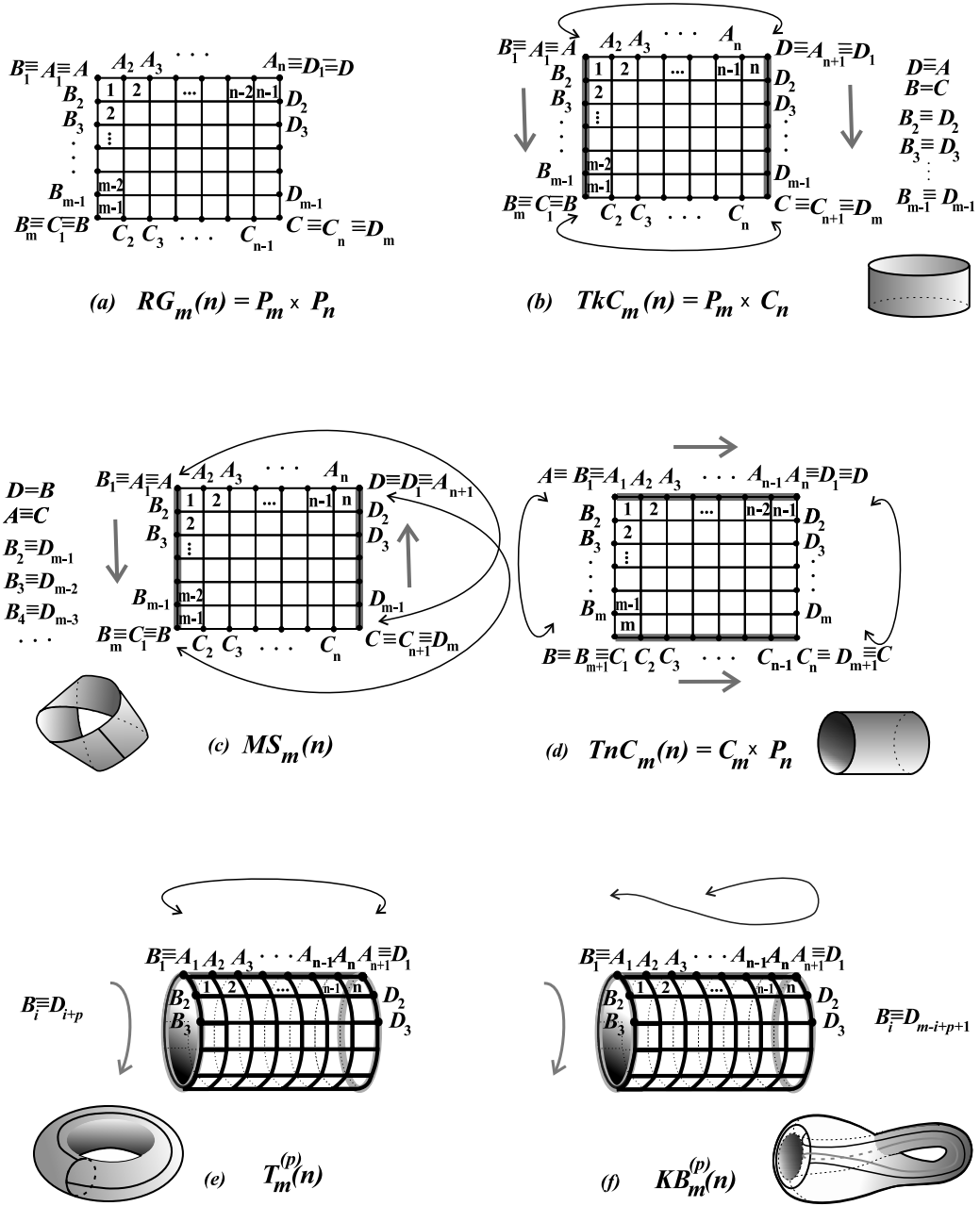}
\\ \ \vspace*{-18pt}
\end{center}
\caption{ {\bf (a)} The rectangular grid  $RG_m(n) = P_m \times P_n$;
{\bf (b)} The thick cylinder  $TkC_m(n) = P_m \times C_n$;
{\bf (c)} The Moebius strip  $MS_m(n)$;
{\bf (d)} The thin cylinder  $TnC_m(n) = P_m \times C_n$;
{\bf (e)} The torus grid   $TG^{(p)}_m(n)$;
{\bf (f)} The Klein bottle  $KB^{(p)}_m(n)$}
\label{SCiMS}
\end{figure}
\unskip

\begin{df} \label{def:grafovi} $ $

The \textbf{rectangular (grid) graph $RG_m(n)$}, \textbf{ thin (grid) cylinder}  $TnC_m(n)$ and \textbf{  thick (grid) cylinder $TkC_m(n)$} ($m,n \in N$) are
 $P_m \times P_{n}$, $ C_m \times P_n$ and  $P_m \times C_n$, respectively.

The \textbf{Moebius strip $MS_m(n)$}  is obtained from $RG_m(n+1) = P_m \times P_{n+1}$
  by identification  of  corresponding vertices from the first and  last  column in the opposite direction without duplicating edges.

  The \textbf{torus (grid)   $TG^{(p)}_{m}(n)$} ($0 \leq p \leq m-1$) is the graph obtained from $TnC_m(n+1)$ by identification of the vertices $B_i $ and $D_{i+p}$,
 $i=1, \ldots , m$, without duplicating edges, where  $B_i$  and $D_i$ denote  the  vertices belonging to the $i$-th row ($1\leq i \leq m$) from  the first and last
column,  respectively  (the sign $+$ in subscript is addition  modulo $m$).

The  \textbf{Klein bottle $KB^{(p)}_m(n)$} is the graph obtained from $TnC_m(n+1)$ by identification of the vertices $B_i $ and  $ D_{m-i+p+1}$, $i=1, \ldots , m$,   without duplicating edges.

The value $m \in N$ is called the \textbf{width} of the grid graph.
The grid graphs $RG_m(n)$, $TkC_m(n)$ and $MS_m(n)$ are uniformly  called the \textbf{ linear grid graphs}, whereas $TnC_m(n)$,  $TG^{(p)}_{m}(n)$ and  $KB^{(p)}_m(n)$
 are   called the \textbf{circular grid graphs}. \end{df}

The 2-factor of the Klein bottle $KB^{(1)}_4(3)$ depicted in  Figure~\ref{primer} (a)
consists of $2$ cycles, while the one of the  torus grid $TG^{(0)}_4(3)$ in  Figure~\ref{primer} (b)
has only one cycle and hence it is Hamiltonian cycle.

\begin{figure}[H]
\begin{center}
\includegraphics[width=3.5in]{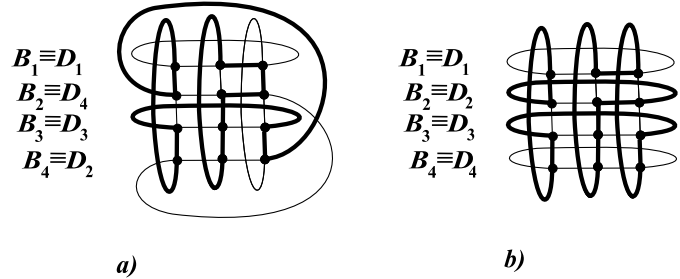}
\\ \ \vspace*{-18pt}
\end{center}
\caption{ {\bf (a)} Klein bottle $KB^{(1)}_4(3)$ with a 2-factor;  {\bf  (b)}  Torus grid $TG^{(0)}_4(3)$ with a Hamiltonian cycle}.
\label{primer}
\end{figure}
\unskip

Observe one of the above defined  grid graphs, $G$ and one of its 2-factors.
Since any vertex $v \in V(G)$ is incident with exactly two edges of the 2-factor, all the possible  arrangements  of these edges around $v$ are shown in
  Figure~\ref{CvorniKod1} (the edges in bold belong to the 2-factor).
The letter assigned to any arrangement (situation) is called \emph{code letter}.

\begin{figure}[H]
\begin{center}
\includegraphics[width=3in]{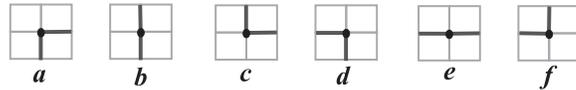}
\\ \ \vspace*{-18pt}
\end{center}
\caption{The six possible arrangements of the two edges  around any vertex with assigned   code letters.}
\label{CvorniKod1}
\end{figure}
\unskip

\begin{df} \label{def:2-factor} \cite{DjBD1,DjDB3}
For a given 2-factor of a linear or circular grid graph $G$  of width $m$  and with $m \cdot n$ vertices ($m,n \in N$),
the {\bf {\em code matrix}} $ \ds \left[ \alpha_{i,j}\right]_{m \times n}$  is
a matrix of order $m \times n$ with entries from $\{ a,b, c, d, e,f \}$ where
$\alpha_{i,j}$ is the code letter for the $i$-th vertex in $j$-th
column of $G$.
\end{df}

By reading  each column of the code matrix from top to down, we obtain  a word over alphabet $\{ a,b,c,d,e,f \}$ of length $m$, named  \emph{ alpha-word}.
When $G$ is circular grid graph, then we treat these words as circular ones and  the letter $ \alpha_{m+1,j} \stackrel{\rm def}{=}  \alpha_{1,j}$ follows the letter  $\alpha_{m,j}$.
The possibility that two vertices are adjacent in the assigned 2-factor is expressed through the two  auxiliary digraphs  ${\cal D}_{ud}$ and ${\cal D}_{lr}$
depicted  in   Figure~\ref{CvorniKod2}.

\begin{figure}[H]
\begin{center}
\includegraphics[width=3in]{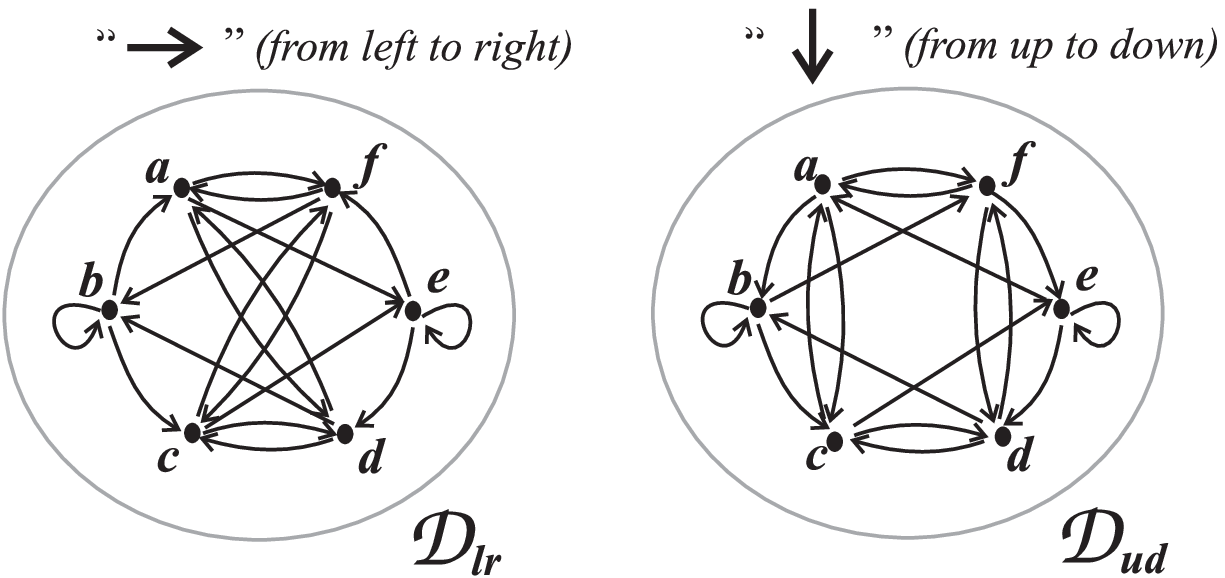}
\\ \ \vspace*{-18pt}
\end{center}
\caption{The  digraphs ${\cal D}_{ud}$ and  ${\cal D}_{lr}$.}
\label{CvorniKod2}
\end{figure}
\unskip

For each alpha-letter $\alpha$, we denote by $\overline{\alpha}$ the alpha-letter
of the situation from  Figure~\ref{CvorniKod1}  obtained
by applying reflection  symmetry with the horizontal  axis as its line of symmetry. Precisely,
  $\ds
\overline{a}  \stackrel{\rm def}{=}  c, \overline{b}  \stackrel{\rm def}{=}  b, \overline{c}  \stackrel{\rm def}{=}  a$,
 $ \overline{d}  \stackrel{\rm def}{=}  f, \overline{e}  \stackrel{\rm def}{=}  e$ and  $\overline{f}  \stackrel{\rm def}{=}  d$.
 Further, for any alpha-word $v = \alpha_{1}\alpha_{2} \ldots \alpha_{m} \in V({\cal D}_{m})$, we introduce   $\overline{v} \stackrel{\rm def}{=}\overline{\alpha}_m \overline{\alpha}_{m-1} \ldots \overline{\alpha}_1 \in V({\cal D}_{m})$.

\begin{thm}   \label{thm:stara} (The characterization of a 2-factor \cite{DjBD1, DjDB3}) \\
The code matrix  $ \ds \left[ \alpha_{i,j}\right]_{m \times n}$ for a given 2-factor of a  grid graph $G$ of width $m$ ($m \in N$)
 has the following properties:

\begin{enumerate}
\item \textbf{Column conditions:} \\ For every fixed \ $j$ ($1 \leq j \leq n$),

\begin{enumerate}
 \item \ if $G$ is linear (circular) grid graph, than the ordered pairs \ $ (\alpha_{i,j}, \alpha_{i+1,j})$
  must be arcs in the digraph \ ${\cal D}_{ud}$  for $1 \leq i \leq m-1$ ($1 \leq i
\leq m$).

\item  if $G$ is linear grid graph, than $ \alpha_{1,j} \in \{ a, d, e \}$ \ and
\ $\alpha_{m,j} \in \{ c, e, f \}$.
             \end{enumerate}

\item \textbf{Adjacency of column condition:} \\  For every  fixed  $j$ ($1 \leq j \leq n-1 $),
 the ordered pairs \ $ (\alpha_{i,j}, \alpha_{i,j+1})$ must be arcs in the digraph \ ${\cal D}_{lr}$  for $1
\leq i \leq m$.

\item \textbf{First and Last Column conditions:}
\begin{enumerate}
 \item
If $G= RG_{m}(n)$ or $G= TnC_{m}(n)$, then
the alpha-word of the first  column consists of the letters from the
set \ $\{ a, b, c \}$
%, \ with $\alpha_{1,1}= a$  \ and  \ $\alpha_{m,1}= c$
 and of  the last column of the letters
  from the set \ $\{  b, d, f \}$.

\item
If $G= TkC_{m}(n)$,  then
 the ordered pairs \ $ (\alpha_{i,n}, \alpha_{i,1})$, \ where \ $1
\leq i \leq m$, \ must be arcs in the digraph \ ${\cal D}_{lr}$.

\item
If $G= MS_{m}(n)$, then
 the ordered pairs \ $ (\overline{\alpha}_{i,n}, \alpha_{m -i+1,1})$,  \ where \  $1 \leq i \leq m$,  must be arcs in the digraph \ ${\cal D}_{lr}$.

 \item
If $G= TG^{(p)}_{m}(n)$,  then
 the ordered pairs \ $ (\alpha_{i+p,n}, \alpha_{i,1})$, \ where \ $1
\leq i \leq m$, \ must be arcs in the digraph \ ${\cal D}_{lr}$.

\item
If $G= KB^{(p)}_{m}(n)$, then
 the ordered pairs \ $ (\alpha_{m+p+1-i,n}, \overline{\alpha}_{i,1})$, \ where \ $1
\leq i \leq m $, \ must be arcs in the digraph \ ${\cal D}_{lr}$.
 \end{enumerate}
\end{enumerate}
The converse, for every matrix $[\alpha_{i,j}]_{m \times n }$ with entries from $\{a,b,c,d,e,f\}$
that satisfies conditions 1--3 there is  a  unique 2-factor  on the considered grid graph $G$.
\end{thm}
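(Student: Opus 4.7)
The plan is to argue both implications simultaneously by analyzing, at each vertex and each edge of $G$, what the 2-factor condition (degree exactly $2$) forces. Start from the forward direction: fix a 2-factor $F$ of $G$ and note that every vertex $v$ of any grid graph in Definition~\ref{def:grafovi} has at most four available ports (up, down, left, right). Since $v$ has degree $2$ in $F$, the pair of edges incident to $v$ realizes exactly one of the $\binom{4}{2}=6$ local configurations of Figure~\ref{CvorniKod1}, so the code matrix is well-defined. The converse direction will later reverse-engineer $F$ from any matrix fulfilling 1--3, taking the set of edges ``used'' at the indicated ports.

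For condition 1(a) I would observe that the vertical edge between $v_{i,j}$ and $v_{i+1,j}$ belongs to $F$ iff the \emph{down} port of $v_{i,j}$ is used, iff the \emph{up} port of $v_{i+1,j}$ is used; hence the two ports must agree. By inspection, $\mathcal{D}_{ud}$ is defined precisely as the set of ordered letter pairs for which this match occurs, proving 1(a). For linear grids, the top (resp.\ bottom) row has no neighbor above (resp.\ below), forcing the corresponding port to be empty and restricting the code letter to $\{a,d,e\}$ (resp.\ $\{c,e,f\}$), giving 1(b); for circular grids the same arc condition then must also hold for $i=m$ via the wrap $\alpha_{m+1,j}:=\alpha_{1,j}$. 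Condition 2 is checked identically, with $\mathcal{D}_{lr}$ replacing $\mathcal{D}_{ud}$ and horizontal ports replacing vertical ones.

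Condition 3 requires a separate case for each gluing. For $RG_m(n)$ and $TnC_m(n)$ the first/last column has no horizontal neighbor, so the corresponding port is empty, yielding the letter sets $\{a,b,c\}$ and $\{b,d,f\}$. For $TkC_m(n)$ the seam is a genuine adjacency, so the $\mathcal{D}_{lr}$ condition carries over to the pair $(\alpha_{i,n},\alpha_{i,1})$. The twisted surfaces $MS_m(n)$, $TG^{(p)}_m(n)$, $KB^{(p)}_m(n)$ differ by an index reversal and/or a vertical reflection of one end of the cylinder; the crucial observation is that the involution $\alpha\mapsto\overline{\alpha}$ swaps the up and down ports while fixing the left and right ports, which is exactly the effect of reflecting a configuration across a horizontal axis. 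Translating the geometric identification of the two ends of the tube into a matching of ports then yields the stated twisted arc conditions.

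Finally, for the converse, given a matrix $[\alpha_{i,j}]$ satisfying 1--3, I would define $F$ to consist of all edges of $G$ whose two endpoints both mark the shared port as used; conditions 1--3 guarantee the two endpoints agree at every edge (including across seams), so $F$ is well-defined, and each vertex inherits exactly the two ports encoded by its code letter, yielding degree $2$. Uniqueness follows from the bijection between code letters and local edge configurations. The main obstacle is the bookkeeping for the twisted seams in (c), (e), (f): one must verify that the combination of the reversal of row indices and the $\overline{\cdot}$ operation precisely matches the topological gluing, so that membership in $\mathcal{D}_{lr}$ truly corresponds to a legal seam identification. Once that port-matching is pinned down, everything else is a direct inspection against Figures~\ref{CvorniKod1}--\ref{CvorniKod2}.
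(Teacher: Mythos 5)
The paper does not actually prove Theorem~\ref{thm:stara}; it is imported verbatim from \cite{DjBD1,DjDB3}, so there is no in-paper argument to compare against. Judged on its own, your port-matching strategy is the standard and correct route: degree~$2$ at a vertex with four available ports gives the $\binom{4}{2}=6$ configurations of Figure~\ref{CvorniKod1}, ${\cal D}_{ud}$ and ${\cal D}_{lr}$ encode agreement of the shared port across a vertical or horizontal edge, missing neighbours force empty ports (conditions 1(b) and 3(a)), and the converse reconstructs the 2-factor by taking exactly the edges whose two endpoints both declare the shared port used. Your observation that $\alpha\mapsto\overline{\alpha}$ is the horizontal-axis reflection (swapping the up/down ports, fixing left/right) is also the right key to the twisted identifications.

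The gap is that the proposal stops exactly where the theorem has nontrivial content. Conditions 3(c)--(e) are the whole point of the statement, and you explicitly defer their verification (``one must verify that the combination of the reversal of row indices and the $\overline{\cdot}$ operation precisely matches the topological gluing''). That verification is not routine bookkeeping one can wave at: you must justify why for $MS_m(n)$ the bar sits on the last-column letter in the pair $(\overline{\alpha}_{i,n},\alpha_{m-i+1,1})$ while for $KB^{(p)}_m(n)$ it sits on the first-column letter in $(\alpha_{m+p+1-i,n},\overline{\alpha}_{i,1})$, why the row indices are reversed as $i\mapsto m-i+1$ versus shifted as $i\mapsto i+p$ modulo $m$, and why the arc is oriented in the stated direction in ${\cal D}_{lr}$ (i.e.\ which end of the seam plays the role of the ``left'' column). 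Each of these is a sign/orientation choice that could plausibly be gotten wrong, and nothing in the proposal pins them down. Until those five seam cases are actually checked against the identifications $B_i\sim D_{i+p}$ and $B_i\sim D_{m-i+p+1}$ of Definition~\ref{def:grafovi}, what you have is a correct plan rather than a proof.
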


This assertion enables that for considered grid graph $G$ and for fixed $m$ ($m \in N$), the counting of such code matrices (in fact all 2-factors of $G$) is  reduced to  the counting of all directed walks
in an auxiliary  digraph. If $G$ is a linear grid graph, then we label this digraph  by  $ {\cal D}_{L,m}
\stackrel{\rm def}{=} $  $(V({\cal D}_{L,m}), E({\cal D}_{L,m}))$, otherwise, if $G$ is a circular one by  $ {\cal D}_{C,m}
\stackrel{\rm def}{=} $  $(V({\cal D}_{C,m}), E({\cal D}_{C,m}))$.
By agreement, in what follows, when the class to which $G$ belongs is not specified, we label
 this digraph and corresponding  sets  without the letter $L$ or $C$ in subscript.
The set of its  vertices  \ $ V({\cal D}_{m}) $  \ consists of all  possible alpha-words, i.e. the words $\alpha_{1,j}\alpha_{2,j} \ldots \alpha_{m,j}$ over  alphabet $ \{ a,b,c,d,e,f \}$  which fulfill  {\em  Column conditions}. An arc $(v,u) \in E({\cal D}_{m})$
 joins  $ v = \alpha_{1,j}\alpha_{2,j} \ldots \alpha_{m,j}$ to  \ $ u= \alpha_{1,j+1}\alpha_{2,j+1} \ldots \alpha_{m,j+1}$, i.e.  $ v \rightarrow u $
   if and only if the {\em Adjacency of column condition} is satisfied for the ordered pair $(v,u)$
(i.e. the vertex $v$  can be the previous column for the vertex $u$ in  the  code  matrix  $ \left[ \alpha_{i,j}\right]_{m \times n}$ for a 2-factor of $G$).

\begin{exm}  \label{exm:0}
For the  Hamiltonian cycle depicted in  Figure~\ref{primer} (a),
the  columns of the code matrix (reading from left to right)  are the words $bfdb$, $cabb$ and  $dfac$.
 Similarly,  the  columns in  Figure~\ref{primer} (b) are  $bfdb$, $cabb$ and $feab$.
 Both graphs $KB^{(1)}_4(3)$ and  $TG^{(0)}_4(3)$ in this figure  has the same transfer digraph  $ {\cal D}_{C,4}$.
  In it, the first  2-factor  corresponds to  the  directed walk  of length two: $\alpha_1\alpha_2\alpha_3\alpha_4 \stackrel{\rm def}{=}$
  $ bfdb \rightarrow $ $ cabb  \rightarrow   \beta_1 \beta_2 \beta_3 \beta_4 \stackrel{\rm def}{=}dfac$, where starting and finishing vertices  fulfill $\beta_1 \beta_2 \beta_3 \beta_4 = dfac \rightarrow \overline{\alpha}_1\overline{\alpha}_4 \overline{\alpha}_3 \overline{\alpha}_2 = bbfd $.
  The second one   corresponds to
   the closed  directed walk  of length $3$:   $bfdb \rightarrow cabb \rightarrow  feab   \rightarrow bfdb$.
\end{exm}

 Clearly, $ {\cal D}_{L,m}$ is subdigraph of $ {\cal D}_{C,m}$ treating each alpha-word from $V({\cal D}_{L,m}) $ as circular one in $V({\cal D}_{C,m}) $.
For these digraphs  we have that $ \ds  \mid V({\cal D}_{C,m}) \mid = 2 \mid V({\cal D}_{L,m}) \mid = 3^m + (-1)^m$ and that both digraphs $ {\cal D}_{C,m}$ and $ {\cal D}_{L,m}$ are disconnected where  $m \geq 2$ \cite{DjBD1,DjDB3}.

\begin{df} \label{def:outlet} \cite{DjBD1,DjDB3}
The {\bf {\em outlet (inlet) word}} of  a vertex $\alpha \equiv \alpha_1 \alpha_2 \ldots \alpha_m \in V({\cal D}_{m})$ is  the  binary word
 $o(\alpha) \equiv o_1o_2 \ldots o_m$ ($i(\alpha) \equiv i_1i_2 \ldots i_m$ ), where   \bc $ \ds o_j
\stackrel{\rm def}{=} \left \{
\begin{array}{cc}{}
0, & \; \; if \; \; \alpha_j \in \{ b, d, f \} \\
1, & \; \; if \; \; \alpha_j \in \{ a, c, e \}
\end{array}
 \right.
 $ \ and \ $
 \ds i_j
\stackrel{\rm def}{=} \left \{
\begin{array}{cc}{}
0, & \; \; if \; \; \alpha_j \in \{ a, b, c \} \\
1, & \; \; if \; \; \alpha_j \in \{ d, e,f \}
\end{array}
 \right.  ,  \; \; \; 1 \leq j \leq m.
$ \ec

\noindent
For a  binary word $v \equiv b_1b_2 \ldots b_{m-1}b_{m} \in \{ 0,1\}^m$,  \bc
$\overline{v} \stackrel{\rm def}{=}  b_mb_{m-1}$ $ \ldots b_{2}b_{1}$ \ and \
$\rho (v) \stackrel{\rm def}{=} b_2 \ldots b_{m-1} b_{m}b_1$. \ec
\end{df}
\begin{exm}  \label{exm:1}
For the 2-factor    of $KB^{(1)}_4(3)$   depicted in  Figure~\ref{primer} (a)
the outlet words for  the first three columns  are $0^4$, $1100$  and $0011$, respectively.
 Similarly, for the  Hamiltonian cycle of $TG^{(0)}_4(3)$  in  Figure~\ref{primer} (b)  they   are $0^4$, $1100$  and $0110$, respectively.
\end{exm}

The   digraph \ $ {\cal D}^*_{m} \stackrel{\rm def}{=}(V({\cal D}^*_{m}), E({\cal D}^*_{m}))$ is obtained
by gluing all  the vertices from  \ $ V({\cal D}_{m})$ \  \ having the same corresponding outlet word (this word  becomes the vertex in new digraph)
 and replacing all the arcs from  $ E({\cal D}_{m})$ starting from these glued vertices and ending
at the   same vertex with only one arc.
It is proved \cite{DjBD1} that  every binary word from $ \{ 0,1 \}^m$ except the word  $(01)^k0$ when  $m=2k+1$ ($k \in N $) belongs to  $V({\cal D}^*_{L,m})$.
  For ${\cal D}^*_{C,m}  \stackrel{\rm def}{=}(V({\cal D}^*_{C,m}), E({\cal D}^*_{C,m}))$,  the set $V({\cal D}^*_{C,m})$ consists of all binary words of length $m$ \cite{DjDB3}.
Both digraphs ${\cal D}^*_{L,m}$ and ${\cal D}^*_{C,m}$ are disconnected for $m \geq 2$ (the adjacent vertices must have  the numbers of $1$s of the same parity).
Each  component of ${\cal D}^*_{L,m}$ or ${\cal D}^*_{C,m}$ is a strongly connected digraph, i.e.
their  adjacency matrices  ${\cal T}^*_{L,m}$ and   ${\cal T}^*_{C,m}$ are  symmetric binary matrices.  While the elements of the first matrix are from the set $\{ 0,1 \}$, in the second they are from the set $\{  0,1,2 \}$.

\begin{thm}   \label{thm:stara1}(\cite{DjBD1}) \
If  $f_m^{RG}(n)$,  $f_{m}^{TkC}(n)$ and  $f_{m}^{MS}(n)$ ($ m \geq 2$) denote the number of 2-factors of $RG_{m}(n)$, $TkC_{m}(n)$ and  $MS_{m}(n)$, respectively, then \
$$ \ds
f_m^{RG}(n)=  a_{1,1}^{(n)}, $$
\bc
 $ \ds
f_{m}^{TkC}(n) = \ds \sum_{\begin{array}{c} v_i \in   V({\cal D}_{L,m}^*)\end{array} }  a_{i,i}^{(n)} $
 \ \  and \ \ $ \ds
f_{m}^{MS}(n) =
   \sum_{\begin{array}{c} v_i, v_j  \in   V({\cal D}_{L,m}^*) \\
\overline{v_i}= v_{j} \end{array} } a_{i,j}^{(n)} , $
\ec
 where      $v_1 \equiv 0^m$ (corresponding to the first row and first column of ${\cal T}^*_{L,m}$) and
 $ a_{i,j}^{(n)}$ denotes the $(i,j)$-entry of $n$-th power of ${\cal T}^*_{L,m}$.
 \end{thm}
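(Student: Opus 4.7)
The plan is to assemble the three formulas by translating Theorem~\ref{thm:stara} into walk-counting statements on ${\cal D}_{L,m}$ and then passing to the quotient digraph ${\cal D}^*_{L,m}$.

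First I would observe that, by Definition~\ref{def:outlet}, the \emph{Adjacency of column condition} in Theorem~\ref{thm:stara} amounts to the componentwise identity $o(v) = i(u)$ for every arc $v \to u$ of ${\cal D}_{L,m}$. Consequently, Theorem~\ref{thm:stara} identifies 2-factors of a width-$m$ linear grid graph with directed walks $v^{(1)}, v^{(2)}, \ldots, v^{(n)}$ in ${\cal D}_{L,m}$ whose endpoints satisfy the graph-specific \emph{First and Last column conditions}.

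The next step is the reduction from ${\cal D}_{L,m}$ to ${\cal D}^*_{L,m}$. Here I would invoke the structural result of \cite{DjBD1}: for any valid pair $(i,o)$ of binary words of length $m$, at most one alpha-word $v \in V({\cal D}_{L,m})$ satisfies $i(v)=i$ and $o(v)=o$. Hence each arc $o_1 \to o_2$ in ${\cal D}^*_{L,m}$ encodes a unique column with inlet $o_1$ and outlet $o_2$, and a directed walk of length $n$ in ${\cal D}^*_{L,m}$ from $o^{(0)}$ to $o^{(n)}$ corresponds bijectively to a walk $v^{(1)}, \ldots, v^{(n)}$ in ${\cal D}_{L,m}$ with $i(v^{(1)}) = o^{(0)}$ and $o(v^{(n)}) = o^{(n)}$.

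Finally I would translate the First and Last Column conditions into endpoint constraints on walks in ${\cal D}^*_{L,m}$. For $RG_m(n)$, condition 3(a) forces $i(v^{(1)}) = 0^m$ and $o(v^{(n)}) = 0^m$ (since the letters $\{a,b,c\}$ have inlet $0$ and $\{b,d,f\}$ have outlet $0$), so the relevant walks go from $v_1 \equiv 0^m$ back to $v_1$, yielding $a_{1,1}^{(n)}$. For $TkC_m(n)$, condition 3(b) is exactly $o(v^{(n)}) = i(v^{(1)})$, producing closed walks of length $n$ and the sum $\sum_i a_{i,i}^{(n)}$. For $MS_m(n)$, a short check that $o(\overline{v}) = \overline{o(v)}$ (using $\overline{a}=c, \overline{b}=b, \overline{c}=a, \overline{d}=f, \overline{e}=e, \overline{f}=d$, all of which preserve outlet classes) transforms condition 3(c) into $i(v^{(1)}) = \overline{o(v^{(n)})}$; summing $a_{i,j}^{(n)}$ over pairs with $\overline{v_i} = v_j$ then gives the claimed formula. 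The main obstacle is the reduction lemma in the second step, but this is already established in \cite{DjBD1}; conditional on it, the remaining work is a mechanical translation of boundary conditions into matrix entries.
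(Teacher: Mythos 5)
The paper does not prove Theorem~\ref{thm:stara1} at all; it is imported verbatim from \cite{DjBD1}, so there is no internal proof to compare against. Your reconstruction is the standard transfer-matrix derivation and is correct: the one load-bearing step --- that a column of a \emph{linear} grid graph is uniquely determined by its inlet--outlet pair of binary words (which is precisely why ${\cal T}^*_{L,m}$ is a $0$--$1$ matrix, unlike ${\cal T}^*_{C,m}$ whose entries can equal $2$) --- is exactly the lemma established in \cite{DjBD1}, and your translation of conditions 3(a)--(c) into the endpoint constraints $i(v^{(1)})=o(v^{(n)})=0^m$, $o(v^{(n)})=i(v^{(1)})$, and $i(v^{(1)})=\overline{o(v^{(n)})}$ (using that the bar operation on code letters preserves outlet classes) is accurate.
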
 \noindent

 \begin{thm}   \label{thm:stara2}(\cite{DjDB3}) \
If  $f_m^{TnC}(n)$,  $f_{m,p}^{TG}(n)$ and  $f_{m,p}^{KB}(n)$ ($ m \geq 2$) denote the number of 2-factors of $TnC_{m}(n)$, $TG^{(p)}_{m}(n)$ and  $KB^{(p)}_{m}(n)$, respectively, then
 $$ \ds
f_m^{TnC}(n)
=  a_{1,1}^{(n)}, \;   $$
\bc
 $ \; \;  \ds
f_{m,p}^{TG}(n) =  \sum_{\begin{array}{c} v_i, v_j  \in   V({\cal D}_{C,m}^*) \\
v_i= \rho^{p}(v_{j}) \end{array} } a_{i,j}^{(n)} \;  $ \ \ and \  \  $ \ds  \; \;
f_{m,p}^{KB}(n) = \sum_{\begin{array}{c} v_i, v_j  \in   V({\cal D}_{C,m})^* \\
\overline{v_i}= \rho^{p}(v_{j}) \end{array} } a_{i,j}^{(n)}$,
 \ec
   where      $v_1 \equiv 0^m$ (corresponding to the first row and first column of ${\cal T}^*_{C,m}$) and
 $ a_{i,j}^{(n)}$ denotes the $(i,j)$-entry of $n$-th power of ${\cal T}^*_{C,m}$.
\end{thm}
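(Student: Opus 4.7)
The plan is to parallel the proof of Theorem~\ref{thm:stara1} for the linear case. By the characterization in Theorem~\ref{thm:stara}, a 2-factor of any of the three graphs corresponds bijectively to a code matrix $[\alpha_{i,j}]_{m\times n}$ satisfying the Column conditions, the Adjacency of column condition, and the appropriate First/Last Column condition. Reading each column as an alpha-word $v^{(j)}\in V(\mathcal{D}_{C,m})$ turns the matrix into a walk $v^{(1)}\to v^{(2)}\to\cdots\to v^{(n)}$ in $\mathcal{D}_{C,m}$, with endpoint constraints determined by which of the three graph classes is in play.

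The bridge to $\mathcal{D}^*_{C,m}$ is the following counting claim, which I would establish first: for any $u,w\in\{0,1\}^m$, the number $N(u,w)$ of alpha-words $v$ satisfying the Column conditions with $i(v)=u$ and $o(v)=w$ equals the $(u,w)$-entry of $\mathcal{T}^*_{C,m}$, and in particular lies in $\{0,1,2\}$. Granting this, since the arc $v^{(j)}\to v^{(j+1)}$ in $\mathcal{D}_{C,m}$ is equivalent to $o(v^{(j)})=i(v^{(j+1)})$, setting $u^{(j)}:=o(v^{(j)})$ for $j\geq 1$ and $u^{(0)}:=i(v^{(1)})$ allows the count of code matrices with a prescribed outlet sequence $(u^{(0)},u^{(1)},\ldots,u^{(n)})$ to factor as $\prod_{j=1}^{n}N(u^{(j-1)},u^{(j)})$. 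Summing over intermediate outlets yields the $(u^{(0)},u^{(n)})$-entry of $(\mathcal{T}^*_{C,m})^{n}$.

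It remains to translate the boundary conditions for each graph. For $TnC_m(n)$, condition~3(a) of Theorem~\ref{thm:stara} forces $i(v^{(1)})=0^m$ and $o(v^{(n)})=0^m$, so $u^{(0)}=u^{(n)}=v_1$ and $f_m^{TnC}(n)=a_{1,1}^{(n)}$. For $TG_m^{(p)}(n)$, condition~3(d) reads $o_{i+p}(v^{(n)})=i_i(v^{(1)})$, that is, $i(v^{(1)})=\rho^{p}(o(v^{(n)}))$, giving $u^{(0)}=\rho^{p}(u^{(n)})$ and the stated torus formula after summing over $u^{(n)}$. For $KB_m^{(p)}(n)$, condition~3(e) gives $o_{m+p+1-i}(v^{(n)})=i_i(\overline{\alpha}_{i,1})$; a direct inspection shows the reflection $\alpha\mapsto\overline\alpha$ preserves the inlet value (each of $a,b,c$ maps to a letter still in $\{a,b,c\}$, and each of $d,e,f$ to one in $\{d,e,f\}$), so the condition becomes $\overline{i(v^{(1)})}=\rho^{p}(o(v^{(n)}))$, and the Klein-bottle sum follows.

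The hardest step will be the counting claim $N(u,w)\in\{0,1,2\}$. Given $u,w$, each row position is forced to a unique letter when $(u_p,w_p)\in\{(0,0),(1,1)\}$ and has two choices otherwise; the cyclic Column conditions on $C_m$ then force the vertical-edge pattern around the column to close up consistently. The analysis of when this closure admits $0$, $1$, or $2$ joint solutions -- and the verification that this count agrees with the arc multiplicity defining $\mathcal{D}^*_{C,m}$ -- is precisely where the circular nature of the columns enters, and is what distinguishes $\mathcal{T}^*_{C,m}$ from the $\{0,1\}$-valued $\mathcal{T}^*_{L,m}$. I expect this to reduce to an analysis of consistent binary closures on $C_m$ and is the main technical obstacle.
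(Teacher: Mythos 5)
Your proposal is essentially correct, and it follows the standard transfer--matrix route that the cited source \cite{DjDB3} takes; note that the present paper only quotes this theorem and contains no proof of it, so there is nothing internal to compare against. All three boundary translations check out: for $TnC_m(n)$ condition 3(a) indeed forces $i(v^{(1)})=o(v^{(n)})=0^m$; for the torus, $\rho^p(w)_i=w_{i+p}$ gives exactly $u^{(0)}=\rho^p(u^{(n)})$; and for the Klein bottle your observation that the reflection $\alpha\mapsto\overline{\alpha}$ preserves the inlet bit (it permutes $\{a,b,c\}$ and $\{d,e,f\}$ separately) correctly converts condition 3(e) into $\overline{u^{(0)}}=\rho^p(u^{(n)})$.

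The one place where your assessment of difficulty is off is the ``counting claim.'' You do not need any analysis of consistent closures on $C_m$, nor the bound $N(u,w)\le 2$, to prove this theorem. The arc relation $\alpha\to\beta$ in ${\cal D}_{C,m}$ is precisely $o(\alpha)=i(\beta)$, and ${\cal D}^*_{C,m}$ is defined by gluing vertices with equal outlet words and collapsing parallel arcs into a single arc per \emph{target vertex} of ${\cal D}_{C,m}$; hence the multiplicity of the arc from class $[u]$ to class $[w]$ is by construction the number of $\beta\in V({\cal D}_{C,m})$ with $i(\beta)=u$ and $o(\beta)=w$, i.e.\ exactly your $N(u,w)$ (using that every binary $m$-word is realized as an outlet word, so the source class is nonempty). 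With that identification the factorization $\prod_{j=1}^{n}N(u^{(j-1)},u^{(j)})$ and the summation over intermediate outlet words immediately yield the entries of $({\cal T}^*_{C,m})^n$. The statement that the entries lie in $\{0,1,2\}$ is a separate structural fact about ${\cal T}^*_{C,m}$ that the formulas never use, so you should not let it stand as the load-bearing step of this proof.
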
 \noindent

 By implementation of the algorithm  for obtaining the digraphs ${\cal D}^*_m$, described above,
  the  data for $m \leq 12$  gathered in \cite{DjBD1}
   suggest the structure of ${\cal D}^*_{L,m}$ expressed in the following theorem which is proved in \cite{DjDB2}.

\begin{thm}  \label{conj:1}  \cite{DjDB2}
\\
For each  $m \geq 2 $,  the digraph  ${\cal D}^{*}_{L,m}$ has exactly $\ds \left\lfloor    \ds \frac{m}{2} \right\rfloor + 1$ components, i.e.
 $\ds {\cal D}^{*}_{L,m} = {\cal A}^*_{L,m}    \cup $ $\ds  (\bigcup_{s=1}^{\left\lfloor    \frac{m}{2} \right\rfloor }{\cal B}^{*(s)}_{L,m})$, where
 $\ds \mid V({\cal B}^{*(1)}_{L,m}) \mid  \geq
  \mid V({\cal B}^{*(2)}_{L,m}) \mid  \geq  \; \; \;  \ldots \; \; \;  \geq \mid V( {\cal B}^{*(\lfloor    m/2 \rfloor )}_{L,m}) \mid $
 and   ${\cal A}^*_{L,m} $ is the one containing $1^m$.
All the components  ${\cal B}^{*(s)}_{L,m}$ ($ 1 \leq s \leq \ds \left\lfloor    \ds \frac{m}{2} \right\rfloor $) are bipartite digraphs.
\\
 If  $m$ is  odd, then $\ds \mid V({\cal B}^{*(s)}_{L,m}) \mid  =\ds  {m + 1 \choose   (m+1)/2 -s} \mbox{ \  and \ } \ds \mid V({\cal A}^{*}_{L,m}) \mid =  \ds {m  \choose (m-1)/2}.$
\\
  If  $m$ is even, then $\ds \mid V({\cal B}^{*(s)}_{L,m}) \mid  = \ds 2 {m \choose   m/2 -s} \mbox{ \  and \  } \ds \mid V({\cal A}^{*}_{L,m}) \mid =  \ds {m \choose m/2}.$ \\
    The vertices $v$ and $\overline{v}$ belong to  the same component. When the component is bipartite they are placed in the same  class  if and only if  $m$ is odd.
\end{thm}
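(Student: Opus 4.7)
The plan is to identify a linear functional on binary words whose values along arcs determine the components. Define the alternating sum
\[
A(w) := \sum_{j=1}^m (-1)^{j-1} w_j, \qquad w = w_1 w_2 \ldots w_m \in \{0,1\}^m,
\]
and set $c_m := 0$ if $m$ is even and $c_m := 1$ if $m$ is odd. The key step is to prove that every arc $(w,w') \in E({\cal D}^*_{L,m})$ satisfies $A(w) + A(w') = 2c_m$. Such an arc corresponds, by Theorem~\ref{thm:stara}, to a column alpha-word $\alpha$ with inlet $w$ and outlet $w'$; introducing the vertical-edge indicators $a_j := \mathbf{1}[\alpha_j \text{ has an up-edge}]$ and reading off the admissible letters $a,b,c,d,e,f$ from $(i_j,o_j) = (w_j,w'_j)$, the Column condition reduces to the local identities
\[
a_j + a_{j+1} + w_j + w'_j = 2, \qquad j = 1, \ldots, m,
\]
with boundary $a_1 = a_{m+1} = 0$. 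Multiplying by $(-1)^{j-1}$ and summing makes the $a$-terms telescope to $a_1 + (-1)^{m+1} a_{m+1} = 0$, while the rest becomes $2\sum_{j=1}^m (-1)^{j-1} - A(w) - A(w')$; since the alternating sum of $1$'s equals $c_m$, the claimed identity follows.

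It follows that $\ell(w) := |A(w) - c_m|$ is constant on components and takes values in $\{0,1,\ldots,\lfloor m/2 \rfloor\}$. A short Vandermonde computation (splitting positions by parity and writing $A = p - q$ for $p, q$ the numbers of $1$'s at odd and even positions) yields the number of $w$ with $A(w) = c$ as a single binomial coefficient; combining $\{A = c_m + s\}$ with $\{A = c_m - s\}$ and applying Pascal's identity then recovers the asserted sizes for $\mathcal{A}^*_{L,m}$ (level $0$, containing $1^m$ since $A(1^m) = c_m$) and each $\mathcal{B}^{*(s)}_{L,m}$ for $s \geq 1$. The extreme value $\ell = \lfloor m/2 \rfloor + 1$ is attained only for $m = 2k+1$ by $w = (01)^k 0$ (which has $A = -k$), precisely the word excluded from $V({\cal D}^*_{L,m})$, accounting for the missing vertex.

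Bipartiteness of each $\mathcal{B}^{*(s)}_{L,m}$ with $s \geq 1$ is then immediate: the partition into $\{A = c_m + s\}$ and $\{A = c_m - s\}$ is a bipartition because the invariant forces every arc to cross between the two sides. For $\mathcal{A}^*_{L,m}$ the two sides coincide with $\{A = c_m\}$, so the component contains odd closed walks and is not bipartite. The action of the reflection--complement $\overline{\cdot}$ satisfies $A(\overline{w}) = (-1)^{m+1} A(w)$ by direct calculation, whence $\ell(\overline{w}) = \ell(w)$ (so $v$ and $\overline{v}$ lie in the same component) and the bipartition sign is preserved precisely when $m$ is odd, matching the same-class condition. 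The main obstacle is the remaining strong-connectivity step: I would construct a small set of canonical two-step moves in $\mathcal{D}^*_{L,m}$ --- each justified by exhibiting an explicit $a$-sequence and acting on $w$ as a local bit modification preserving $A(w)$ --- and show by induction on the Hamming distance that any two words with the same value of $A$ can be joined. The case $m = 2k+1$ near the excluded word $(01)^k 0$ requires particular attention, since there the $a$-sequence is forced into its extreme alternating configuration.
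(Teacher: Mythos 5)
First, a point of reference: this theorem is not proved in the paper at hand --- it is quoted from \cite{DjDB2}, and the present paper only records (after Definition~\ref{def:ch}) that the components of ${\cal D}^{*}_{L,m}$ are the subdigraphs induced by the sets $S_m^{(s)}$, built from the invariant $Z(x)=odd(x)-even(x)$ with classes $R_m^{(s)}$, $G_m^{(s)}$. Your functional $A$ is exactly that invariant in disguise: with $c_m=\lceil m/2\rceil-\lfloor m/2\rfloor$ one has $Z(w)=c_m-A(w)$, so your level $\ell(w)=\mid A(w)-c_m\mid$ equals $\mid Z(w)\mid$ and your level sets $\{A=c_m+s\}$, $\{A=c_m-s\}$ are precisely $G_m^{(s)}$ and $R_m^{(s)}$. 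Your derivation of the arc identity $A(w)+A(w')=2c_m$ from the degree-two relations $a_j+a_{j+1}+w_j+w'_j=2$ with $a_1=a_{m+1}=0$ and telescoping is correct, and it correctly delivers the invariance of $\ell$, the bipartition of each level set with $s\geq 1$, the Vandermonde/Pascal count of each level set, the identification of the single excluded word $(01)^k0$ for $m=2k+1$, and the behaviour of $\ell$ and of the bipartition classes under $v\mapsto\overline{v}$. Up to this reparametrization you are following the same route as the cited proof.

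The genuine gap is that everything above only shows each component is \emph{contained in} one level set, i.e.\ that ${\cal D}^{*}_{L,m}$ has \emph{at least} $\lfloor m/2\rfloor+1$ components and that any component inside a level with $s\geq 1$ is bipartite. The theorem asserts equality of the component count, the exact orders, and that $v$ and $\overline{v}$ lie in the \emph{same} component --- all of which require the converse statement that each set $S_m^{(s)}$ induces a connected subdigraph. You explicitly defer this (``I would construct a small set of canonical two-step moves \ldots''), but it is the substantive part of the proof: it is where \cite{DjDB2} does the real work, by fixing canonical representatives (the queens $Q_m^{(s)}$ and court ladies $L_m^{(s)}$ of Definition~\ref{def:nova1}) and exhibiting, for every $w\in S_m^{(s)}$, explicit admissible column alpha-words realizing a walk from $w$ to the representative. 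Until you actually produce the moves, verify that each one is realizable by a legal column word (the $a$-sequence must satisfy all the local constraints simultaneously, not just the summed identity), and handle the boundary case near $(01)^k0$ that you yourself flag, the claims about the number and the orders of the components remain unproved.
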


For the digraph ${\cal D}^*_{C,m}$, the component which contains $1^m$  is  marked  by
 ${\cal A}^*_{C,m} $, while  the one  containing $0^m$ by ${\cal N}^*_{m} $ (the one responsible for counting 2-factors for thin cylinder grid graphs).
 The former is produced from the component ${\cal A}_{C,m} $ of ${\cal D}_{C,m}$ which contains the vertex $e^m$, and the latter one from the component ${\cal N}_{m} $
 which contains the vertex $b^m$.
Data for $m \leq 10$  gathered in \cite{DjDB3}
   suggested the structure of ${\cal D}^*_{C,m}$ expressed in the following theorem.

\begin{thm} \label{thm:hip} (MAIN THEOREM)  \cite{DjDB3} \\
For each even  $m\geq 2 $,
 the digraph  ${\cal D}^{*}_{C,m}$ has exactly $\ds \left\lfloor    \ds \frac{m}{2} \right\rfloor + 1$ components, i.e.
 $\ds {\cal D}^{*}_{C,m} =$ $ {\cal A}^*_{C,m}    \cup $ $\ds  (\bigcup_{s=1}^{\left\lfloor    \frac{m}{2} \right\rfloor }{\cal B}^{*(s)}_{C,m})$, where
  ${\cal A}^*_{C,m} $  contains both  $1^m$ and $0^m$, all the components  ${\cal B}^{*(s)}_{C,m}$ ($ 1 \leq s \leq \ds \left\lfloor    \ds \frac{m}{2} \right\rfloor $) are bipartite digraphs,
  $\ds \mid V({\cal B}^{*(s)}_{C,m}) \mid  = \ds 2 {m \choose   m/2 -s} \mbox{ \  and \  } \ds \mid V({\cal A}^{*}_{C,m}) \mid =  \ds \ds {m \choose m/2}.$ \\ \\
For each odd  $m\geq 1 $,  the digraph  ${\cal D}^{*}_{C,m}$ has exactly two  components, i.e.
 ${\cal D}^{*}_{C,m} = {\cal A}^*_{C,m}    \cup {\cal N}^*_{m} $,  which are mutually  isomorphic and
  with $2^{m-1}$ vertices.
 \end{thm}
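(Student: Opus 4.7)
The plan is to combine Theorem~\ref{conj:1} on $\mathcal{D}^*_{L,m}$ with two natural automorphisms of $\mathcal{D}_{C,m}$ that descend to $\mathcal{D}^*_{C,m}$, and then to treat the parities of $m$ separately. Parity preservation of outlet weight under arcs is already recorded in the excerpt, so the bipartition of $\{0,1\}^m$ by weight parity is a first skeleton of the decomposition and gives a lower bound of two on the component count.

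Every linear column alpha-word is automatically a circular one: the boundary constraints $\alpha_1 \in \{a,d,e\}$ and $\alpha_m \in \{c,e,f\}$ force $(\alpha_m, \alpha_1)$ into the arc set $\{c,e,f\} \times \{a,d,e\}$ of $\mathcal{D}_{ud}$. Consequently $V(\mathcal{D}^*_{L,m}) \subseteq V(\mathcal{D}^*_{C,m})$, every arc of $\mathcal{D}^*_{L,m}$ persists, the only new vertex is $(01)^k 0$ (present exactly for odd $m=2k+1$), and every new arc of $\mathcal{D}^*_{C,m}$ comes from a ``circular-only'' alpha-word with $\alpha_1 \in \{b,c,f\}$ or $\alpha_m \in \{a,b,d\}$. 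Two automorphisms of $\mathcal{D}_{C,m}$ descend to $\mathcal{D}^*_{C,m}$: the rotation $\rho$, inherited from the cyclic symmetry of $C_m$, and the letter-level involution $\sigma: a \leftrightarrow f,\; b \leftrightarrow e,\; c \leftrightarrow d$, which one verifies directly to preserve both $\mathcal{D}_{ud}$ and $\mathcal{D}_{lr}$. Because $\sigma$ swaps $\{a,c,e\}$ with $\{b,d,f\}$, the induced map on $\mathcal{D}^*_{C,m}$ is the bit-complementation $\tau(b_1 \cdots b_m) = (1-b_1) \cdots (1-b_m)$, which flips outlet-weight parity if and only if $m$ is odd.

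For odd $m$ it thus suffices to prove that each parity class of $\{0,1\}^m$ is a single strongly connected component: the involution $\tau$ then automatically provides the claimed isomorphism $\mathcal{A}^*_{C,m} \to \mathcal{N}^*_m$, each of cardinality $2^{m-1}$. Since each component of $\mathcal{D}^*_{L,m}$ is already strongly connected by Theorem~\ref{conj:1}, the task reduces to producing, for every pair of same-parity linear components, a bridging arc in $\mathcal{D}^*_{C,m}$ coming from a circular-only column, and to absorbing $(01)^k 0$ analogously; such bridging columns can be built as rotations of suitable linear columns whose rotated boundary no longer obeys the linear condition but still defines a valid arc of $\mathcal{D}_{ud}$. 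The automorphism $\rho$ then propagates a single bridge along the full cyclic orbit. For even $m$ the vertex sets of $\mathcal{D}^*_{L,m}$ and $\mathcal{D}^*_{C,m}$ coincide, and the proof reduces to verifying that no extra arc of $\mathcal{D}^*_{C,m}$ joins two distinct linear components. Granting this, the partition of $\mathcal{D}^*_{C,m}$ is exactly the one described in Theorem~\ref{conj:1}: $\mathcal{A}^*_{C,m}$ inherits both $0^m$ and $1^m$ (already sharing a common linear component), and the bipartiteness of every $\mathcal{B}^{*(s)}_{C,m}$ is inherited once the extra arcs are shown to respect the $v \leftrightarrow \overline{v}$ bipartition, which is consistent with the actions of $\rho$, $\tau$, and the reflection.

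The principal obstacle is precisely this structural dichotomy: enough circular-only columns must exist for $m$ odd to merge every pair of same-parity linear components, while for $m$ even no circular-only column can produce an arc between different linear components. Both assertions reduce to a careful combinatorial classification of the inlet/outlet pairs $(i,o)$ realizable by circular-only alpha-words, and I expect this, together with the subtle role played by the exceptional vertex $(01)^k 0$, to be the most delicate part of the argument.
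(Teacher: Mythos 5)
Your outline correctly identifies the skeleton of the paper's argument: the inclusion of $\mathcal{D}^{*}_{L,m}$ in $\mathcal{D}^{*}_{C,m}$, the reduction to Theorem~\ref{conj:1}, the parity obstruction, and the letter involution (the paper uses exactly $a\mapsto f,\ b\mapsto e,\ c\mapsto d,\ d\mapsto c,\ e\mapsto b,\ f\mapsto a$) whose induced bit-complementation furnishes the isomorphism $\mathcal{A}^{*}_{C,m}\cong\mathcal{N}^{*}_{m}$ for odd $m$. But the two claims you yourself label ``the principal obstacle'' are the entire mathematical content of the theorem, and you prove neither. For even $m$ one must show that no directed walk in $\mathcal{D}^{*}_{C,m}$ joins two queens $Q_m^{(s_1)}=(01)^{s_1}0^{m-2s_1}$ and $Q_m^{(s_2)}=(01)^{s_2}0^{m-2s_2}$ with $s_1\equiv s_2 \pmod 2$ and $s_1\neq s_2$; the paper does this not by classifying circular-only columns but by a global counting argument: a hypothetical walk of length $k$ determines a spanning union of paths and cycles in the bipartite thin-cylinder grid on $m\cdot k$ vertices, and the open paths exhibit a colour imbalance (of size $|s_1-s_2|/2$ or $(s_1+s_2)/2$ depending on the parity of $k$) that contradicts the equality of the two colour classes. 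For odd $m$ one must actually exhibit the merging arcs; the paper writes down the explicit column word $\alpha=f(af)^{s}ab^{m-2s-2}$, verifies $i(\alpha)=L_m^{(s)}$ and $o(\alpha)=Q_m^{(s+2)}$, and thereby chains $S_m^{(s)}$ to $S_m^{(s+2)}$ for every $s$, collapsing the $\lfloor m/2\rfloor+1$ linear components into exactly two. Your remark that bridging columns ``can be built as rotations of suitable linear columns'' is not a construction, and nothing in your text guarantees such columns exist for every pair of same-parity components.

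A further gap is the bipartiteness claim for even $m$. The relevant bipartition of $\mathcal{B}^{*(s)}_{C,m}$ is not ``the $v\leftrightarrow\overline{v}$ bipartition'' but the partition into $R_m^{(s)}$ and $G_m^{(s)}$ according to the sign of $Z(v)$, and one must check that every arc of $E(\mathcal{B}^{*(s)}_{C,m})\setminus E(\mathcal{B}^{*(s)}_{L,m})$ crosses between these classes. The paper does this by locating the first position $J$ at which the inlet or outlet word of a realizing circular column is $1$, rotating by $J$ to obtain a legitimate arc $\rho^{J}(v)\rightarrow\rho^{J}(w)$ of $\mathcal{D}^{*}_{L,m}$, and transporting the known two-colouring back via the identity $Z(\rho^{j}(x))=(-1)^{j}Z(x)$ (the same rotation device would also serve your even-$m$ component-separation step, but you would still have to carry it out). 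Appealing to ``consistency with the actions of $\rho$, $\tau$, and the reflection'' does not substitute for this verification, so as written the proposal is a plan rather than a proof.
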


The  aim of  this paper is the first proof of Theorem~\ref{thm:hip}.
In the next section, we prove the main theorem.

\begin{center}
{\bf 2. PROOF OF  THE MAIN THEOREM}
\end{center}

\begin{df} \label{def:ch}  \cite{DjDB2}
The total number of 0's at odd (even) positions in a binary word $x$ of length $m$ ($ m \in N$)
is denoted by $odd(x)$ ($even(x)$).
\bc
$Z(x) \stackrel{\rm def}{=} odd(x) - even(x)$. \ec
\noindent
The set  $S_{m}^{(0)}$ ($m \in N$) consists of all the  binary $m$-words whose number of 0's at odd positions is equal to the  number of 0's at even  positions. \\
For  $1 \leq s \leq \lfloor m/2 \rfloor$,  $S_m^{(s)}\stackrel{\rm def}{=}  R_m^{(s)} \cup  G_m^{(s)}$  where
the sets $R_{m}^{(s)}$  and $G_{m}^{(s)}$ consist of  all the  binary words $x$ of the length $m$
for which  $ Z(x)  = s$ and   $ Z(x) = -s$, respectively. Additionally, if  $m$ is odd, then $R_m^{(\lceil m/2 \rceil)}\stackrel{\rm def}{=}\{ 0 (10)^{\lfloor m/2 \rfloor} \}$.
\end{df}

Note that $\ds \bigcup_{s=0}^{\lfloor m/2 \rfloor} S_m^{(s)}  =  V({\cal D}_{L,m}^*)$, where $ V({\cal D}_{L,m}^*) = \{ 0,1\}^m$ for $m$-even, and
 $ V({\cal D}_{L,m}^*) = \{ 0,1\}^m \backslash R_m^{(\lceil m/2 \rceil)} = \{ 0,1\}^m \backslash \{ 0 (10)^{\lfloor m/2 \rfloor} \}$ for $m$-odd.
In \cite{DjDB2}, it is proved that the subdigraphs of ${\cal D}^{*}_{L,m}$ induced  by the sets  $S_{m}^{(s)}$ ($0 \leq s \leq \lfloor m/2 \rfloor$)
are its  components, i.e. $ \langle S_{m}^{(0)} \rangle_{{\cal D}^{*}_{L,m}} = {\cal A}^*_{L,m}$  and  $ {\cal B}^{*(s)}_{L,m}  = \langle S_{m}^{(s)} \rangle_{{\cal D}^{*}_{L,m}} $,  where $s=1,2, \ldots ,  \lfloor m/2 \rfloor$. Each component $ {\cal B}^{*(s)}_{L,m} $ is a bipartite digraph and the  sets
$ R_m^{(s)} $ and $G_m^{(s)}$ are its classes. We call  the vertices  from $ R_m^{(s)} $ and $G_m^{(s)}$ \textbf{ red } and \textbf{ green } vertices, respectively.
Some of the representatives for these sets are introduced in the following way.
\begin{df} \label{def:nova1} \cite{DjDB2}
For even $m$,  the zero-word  $Q_{m}^{(0)} \stackrel{\rm def}{=} 0^{m} \in S_{m}^{(0)} $ and  the words   $Q_{m}^{(s)} \stackrel{\rm def}{=} (01)^s0^{m-2s} \in R_{m}^{(s)}$ ($1 \leq s \leq m/2 $)
are called  the  \textbf{ queens}.
\\
For odd  $m$, the words
 $Q_{m}^{(s)} \stackrel{\rm def}{=} (01)^{s-1}0^{m-2s+2} \in R_{m}^{(s)}$ ($1 \leq s \leq \lfloor m/2 \rfloor+1$) are
called  the  \textbf{ queens}, while   the words
 $L_{m}^{(s)} \stackrel{\rm def}{=}(10)^{s+1}0^{m-2s-2} \in S_{m}^{(s)}$ ($0 \leq s < \lfloor m/2 \rfloor $) and the word
 $\ds L_{m}^{(\lfloor m/2 \rfloor)} \stackrel{\rm def}{=}(10)^{\lfloor m/2 \rfloor}1 \in S_{m}^{(\lfloor m/2 \rfloor)}$   are  called the   \textbf{ court ladies}.
\end{df}

\begin{lem}  \label{lem:1}
For each even  $m\geq 2 $,  the digraph  ${\cal D}^{*}_{C,m}$ has exactly $\ds   \ds \frac{m}{2}+ 1$ components, i.e.
 $\ds {\cal D}^{*}_{C,m} = {\cal A}^*_{C,m}   \cup ( \bigcup_{s=1}^{m/2}{\cal B}^{*(s)}_{C,m}),$
  where  the component  ${\cal A}^*_{C,m} $ is the one containing $1^m$,
 $$\ds \mid V({\cal A}^{*}_{C,m}) \mid =  \ds \ds {m \choose m/2} \mbox{ \  and \  } \ds \mid V({\cal B}^{*(s)}_{C,m}) \mid  = \ds 2 {m \choose   m/2 -s}, \; \;
 1 \leq s \leq m/2.$$
 \end{lem}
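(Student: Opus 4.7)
My plan is to derive Lemma~\ref{lem:1} from the already-established Theorem~\ref{conj:1} by showing that, for even $m$, the passage from $\mathcal{D}^{*}_{L,m}$ to $\mathcal{D}^{*}_{C,m}$ does not merge any of the known components. Since $V(\mathcal{D}^{*}_{L,m}) = V(\mathcal{D}^{*}_{C,m}) = \{0,1\}^m$ in the even case and $\mathcal{D}^{*}_{L,m}$ is a spanning subdigraph of $\mathcal{D}^{*}_{C,m}$, every component of $\mathcal{D}^{*}_{C,m}$ is a disjoint union of components of $\mathcal{D}^{*}_{L,m}$. By Theorem~\ref{conj:1}, the latter are precisely $\langle S_m^{(s)} \rangle$ for $0 \leq s \leq m/2$, with the sizes asserted in the lemma. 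It therefore suffices to verify that every arc of $\mathcal{D}^{*}_{C,m}$ joins two vertices lying in the same $S_m^{(s)}$; this will automatically place $1^m$ into $\langle S_m^{(0)}\rangle$, since $Z(1^m)=0$.

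Recall that an arc $v \to u$ in $\mathcal{D}^{*}_{m}$ exists precisely when some alpha-word $\beta \in V(\mathcal{D}_{m})$ satisfies $i(\beta)=v$ and $o(\beta)=u$. Arcs coming from linear alpha-words already live in $\mathcal{D}^{*}_{L,m}$, so I focus on the genuinely new case $\beta \in V(\mathcal{D}_{C,m}) \setminus V(\mathcal{D}_{L,m})$. Reading off the structure of $\mathcal{D}_{ud}$, such a $\beta$ is characterized by the fact that its top letter carries an upward edge (so $\beta_1 \notin \{a,d,e\}$) and its bottom letter carries a downward edge (so $\beta_m \notin \{c,e,f\}$), the two being matched by the wraparound arc $(\beta_m,\beta_1)$. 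The heart of the proof will be a rotation observation: for every such $\beta$ except the constant word $b^m$, there exists $k \in \{1,\dots,m-1\}$ with $\rho^{k}(\beta) \in V(\mathcal{D}_{L,m})$. Indeed, if no rotation of $\beta$ is linear, then every position of $\beta$ must admit an upward edge; pairing this with the column adjacency $d_j = u_{j+1}$ propagates the condition to downward edges as well, and $b$ is the unique letter carrying both.

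For $\beta = b^m$ one has $i(\beta) = o(\beta) = 0^m$, so the induced arc is just a self-loop at a vertex of $S_m^{(0)}$ and contributes no connectivity. For every other new $\beta$, pick a linear rotation $\beta' := \rho^{k}(\beta)$; since $i$ and $o$ commute with the cyclic shift $\rho$, the arc $\rho^{k}(v) \to \rho^{k}(u)$ already belongs to $\mathcal{D}^{*}_{L,m}$, and Theorem~\ref{conj:1} gives $|Z(\rho^{k}(v))| = |Z(\rho^{k}(u))|$. A direct verification (using $\sum_j \epsilon_j = 0$ for even $m$) yields $Z(\rho(w)) = -Z(w)$, so $|Z|$ is $\rho$-invariant and $|Z(v)| = |Z(u)|$, placing $v$ and $u$ in a common $S_m^{(s)}$ as required. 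I expect the main technical obstacle to be the careful verification of the rotation lemma isolating $b^m$ as the unique non-rotatable alpha-word; once this is in hand, the identification of the components, the distinguished role of $\mathcal{A}^{*}_{C,m}$, and the assertions $|V(\mathcal{B}^{*(s)}_{C,m})| = 2\binom{m}{m/2-s}$ and $|V(\mathcal{A}^{*}_{C,m})| = \binom{m}{m/2}$ are inherited directly from Theorem~\ref{conj:1}.
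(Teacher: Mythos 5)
Your proposal is correct, but it reaches Lemma~\ref{lem:1} by a genuinely different route than the paper does. The paper argues at the level of \emph{walks}: since Theorem~\ref{conj:1} already gives the component structure of ${\cal D}^{*}_{L,m}$, it suffices to show that two queens $Q_m^{(s_1)}$ and $Q_m^{(s_2)}$ with $s_1\equiv s_2 \ (mod\ 2)$, $s_1\neq s_2$, are not joined by any directed walk in ${\cal D}^{*}_{C,m}$; this is done by two-colouring the $m\times k$ portion of the thin cylinder determined by a hypothetical walk and showing that the open paths of the induced spanning union of paths and cycles create an imbalance between the two colour classes, a contradiction. You instead argue at the level of \emph{single arcs}: every arc of ${\cal D}^{*}_{C,m}$ realized by a circular alpha-word other than $b^m$ can be rotated into an arc of ${\cal D}^{*}_{L,m}$ (your rotation observation, correctly isolating $b^m$ as the only word all of whose positions carry both an up- and a down-edge), and since $Z(\rho(x))=-Z(x)$ for even $m$, the quantity $|Z|$ is preserved along every arc, so no new arc leaves a set $S_m^{(s)}$. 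This is in fact exactly the mechanism the paper itself deploys in the proof of Lemma~\ref{lem:2}, where the minimal index $J$ of a non-$b$ letter plays the role of your $k$ and the identity $Z(\rho^{j}(x))=(-1)^{j}Z(x)$ is stated explicitly; your argument promotes that local computation to prove Lemma~\ref{lem:1} as well, which makes the queen/colouring argument dispensable and unifies the two lemmas. What you gain is a shorter, purely combinatorial-on-words proof; what the paper's version offers is a geometric picture of why distant components cannot merge. One bookkeeping point: the input you need from the linear case is not just the statement of Theorem~\ref{conj:1} but the identification of the components of ${\cal D}^{*}_{L,m}$ with the induced subdigraphs $\langle S_m^{(s)}\rangle_{{\cal D}^{*}_{L,m}}$, which the paper quotes from \cite{DjDB2} immediately after Definition~\ref{def:ch}; with that reference made explicit, your argument is complete.
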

\begin{proof}
Note that ${\cal D}^{*}_{L,m}$ is subdigraph of ${\cal D}^{*}_{C,m}$. The only difference is  in added (new) arcs.
Recall that the statement of this lemma is valid if ${\cal D}^{*}_{C,m}$ is replaced with  ${\cal D}_{L,m}^*$ (Theorem~\ref{conj:1}).
 Therefore, it is sufficient to prove that two different  queens $Q_m^{(s_1)}=(01)^{s_1}0^{m-2s_1}$ and $Q_m^{(s_2)}=(01)^{s_2}0^{m-2s_2}$ with the same parity of the number $1$'s ($s_1 \equiv s_2 (mod 2)$, $s_1 \neq s_2$) are not connected in ${\cal D}^{*}_{C,m}$.

For that purpose, we suppose the opposite, i.e.  that in ${\cal D}_{C,m}^*$ there exists a directed walk $v_0 \rightarrow v_1 \rightarrow v_2 \rightarrow \ldots \rightarrow v_{k-1} \rightarrow v_k$ of length $k \in N$, where   $ v_0 = Q_m^{(s_1)}$ and $ v_k = Q_m^{(s_2)}$.
For this directed  walk, the corresponding part of the grid  induced by $m \cdot k$ vertices (the thin cylinder grid graph)  is bipartite because $m$ is even.
 The directed walk $v_0 \rightarrow v_1 \rightarrow v_2 \rightarrow \ldots \rightarrow v_{k-1} \rightarrow v_k$ determines a spanning union of paths  (open paths and   cycles) in this grid.  The ends of these open paths belong to the first or/and the last column of the  cylinder grid. Each   cycle (if exists) in  this union  has the same number of  vertices of both colors (say gray and black). However, for the union of the open paths it is not valid.
Namely, if $k$ is even  (see Figure~\ref{nepovezanost} (a)), the difference of the numbers of open paths with both ends in gray vertices and the ones in black vertices is exactly
$\mid  \ds \frac{s_1 - s_2}{2}\mid > 0$. If $k$ is odd (see Figure~\ref{nepovezanost} (b)), all $\mid  \ds \frac{s_1 + s_2}{2}\mid$ open paths have end vertices in the same color. In both cases we come in contradiction with the fact that in the considered part of the grid the numbers of vertices of both colors are equal.
\\
In this way, we obtain that ${\cal A}^*_{C,m} = \langle S_{m}^{(0)} \rangle_{{\cal D}^{*}_{C,m}}$  and  $ {\cal B}^{*(s)}_{C,m}  = \langle S_{m}^{(s)} \rangle_{{\cal D}^{*}_{C,m}} $,  where $\ds s=1,2, \ldots ,  \frac{m}{2}$.
Consequently,
$\ds \mid V({\cal A}^{*}_{C,m}) \mid =  \mid  S_{m}^{(0)}  \mid =  \mid V({\cal A}^{*}_{L,m}) \mid = \ds {m \choose m/2}$ \  and
$ \ds \mid V({\cal B}^{*(s)}_{C,m}) \mid  = $  $ \mid  S_{m}^{(s)}  \mid = $  \\
$ \ds  \mid V({\cal B}^{*(s)}_{L,m}) \mid  =  $  $\ds 2 {m \choose   m/2 -s}, \; \;  1 \leq s \leq m/2.$ $\Box$

\begin{figure}[H]
\begin{center}
\includegraphics[width=5in]{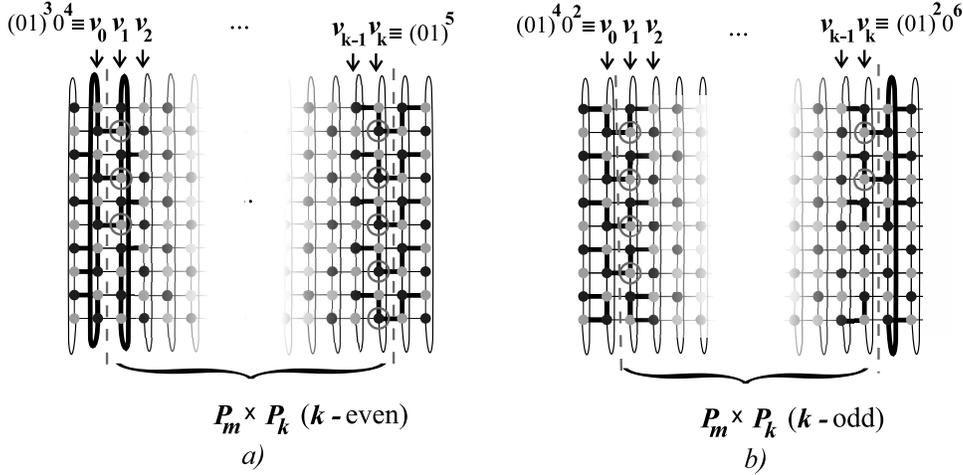}
\\ \ \vspace*{-18pt}
\end{center}
\caption{Two different queens $Q_m^{(s_1)}=(01)^{s_1}0^{m-2s_1}$ and $Q_m^{(s_2)}=(01)^{s_2}0^{m-2s_2}$ where $m$ is even,  $s_1 \equiv s_2 (mod \;  2)$ and  $s_1 \neq s_2$ are not connected in ${\cal D}^{*}_{C,m}$.}
\label{nepovezanost}
\end{figure}
\unskip
\end{proof}

\begin{lem}  \label{lem:2}
When $m$ is even, all the components  ${\cal B}^{*(s)}_{C,m}$ ($ 1 \leq s \leq \ds   \ds \frac{m}{2}  $) are bipartite digraphs.
    The vertices $v$ and $\overline{v}$ belong to  the same component but in different classes (colors).
\end{lem}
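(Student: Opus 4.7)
The plan is to leverage the decomposition from Lemma~\ref{lem:1} and then prove a single local invariant that pins down the bipartite structure. By Lemma~\ref{lem:1}, each component ${\cal B}^{*(s)}_{C,m}$ with $1\le s\le m/2$ is the subdigraph of ${\cal D}^{*}_{C,m}$ induced by $S_m^{(s)}=R_m^{(s)}\cup G_m^{(s)}$, so to prove bipartiteness with classes $R_m^{(s)}$ and $G_m^{(s)}$ it suffices to show
\begin{equation*}
v\to u \text{ in } {\cal D}^{*}_{C,m} \;\Longrightarrow\; Z(u)=-Z(v).
\end{equation*}

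To verify this, I would unfold the definition of an arc in ${\cal D}^{*}_{C,m}$: such an arc exists iff there is a circular alpha-word $\beta\in V({\cal D}_{C,m})$ with $i(\beta)=v$ and $o(\beta)=u$. Viewing $\beta$ as a labeling of the vertices of $C_m$, the letter $b$ contributes two vertical edges (degree $2$, both horizontal ports closed), the letter $e$ contributes none (degree $0$, both horizontal ports open), and each of $a,c,d,f$ contributes one vertical edge (degree $1$, exactly one horizontal port open). Since this vertical-edge subgraph lives inside a single cycle $C_m$, its components are isolated $e$-vertices, arc-like paths (with $b$-interior and $\{a,c,d,f\}$-endpoints), and possibly the whole $C_m$.

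Because $m$ is even, $C_m$ admits the proper $2$-colouring by position parity, and the heart of the proof is to compare two expressions for the same quantity. A direct letter-by-position computation gives
\[
Z(v)+Z(u)=2(b_o-b_e)+(h_o-h_e),
\]
where $b_o,b_e$ count the letter $b$ at odd/even positions, $h_o,h_e$ count the degree-one letters at odd/even positions, and the letter $e$ contributes $0$ everywhere. On the other hand, summing the position-parity imbalance $(\#\text{odd})-(\#\text{even})$ over the components of the vertical-edge subgraph yields $0$, because $C_m$ is balanced. Contributions come only from isolated $e$-vertices ($\pm 1$) and odd-vertex paths ($\pm 1$); even-vertex paths and the whole cycle contribute $0$. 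Splitting each odd-vertex path into its two endpoints and its interior and comparing with the two sums above gives $b_o-b_e=e_o-e_e$ and $h_o-h_e=-2(e_o-e_e)$, so substituting yields $Z(v)+Z(u)=0$.

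For the last claim, $v$ and $\overline{v}$ lie in the same component because they already do so in the subdigraph ${\cal D}^{*}_{L,m}\subseteq{\cal D}^{*}_{C,m}$ by Theorem~\ref{conj:1}; and for even $m$, reversal interchanges odd and even positions, so $odd(\overline{v})=even(v)$ and $even(\overline{v})=odd(v)$, giving $Z(\overline{v})=-Z(v)$. Hence for $s=|Z(v)|\ge 1$, if $v\in R_m^{(s)}$ then $\overline{v}\in G_m^{(s)}$ (and symmetrically), so they fall in different classes. I expect the main obstacle to be the bipartite balance bookkeeping in the third paragraph: carefully relating the component-wise parity imbalance to the two aggregates $b_o-b_e$ and $h_o-h_e$ appearing in $Z(v)+Z(u)$, and checking that the letter $e$ contributions on the two sides cancel exactly.
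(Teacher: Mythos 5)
Your proof is correct, but it takes a genuinely different route from the paper's. The paper handles only the \emph{new} arcs of $E({\cal B}^{*(s)}_{C,m}) \backslash E({\cal B}^{*(s)}_{L,m})$ and reduces each of them to the linear case: it rotates the underlying alpha-word by a suitable amount $J$ so that the rotated word lies in $V({\cal D}_{L,m})$, invokes the already-established bipartition $(R_m^{(s)},G_m^{(s)})$ of ${\cal B}^{*(s)}_{L,m}$, and transfers the conclusion back via the identity $Z(\rho^j(x))=(-1)^jZ(x)$. You instead prove, self-containedly and uniformly for \emph{every} arc $v\to u$ of ${\cal D}^{*}_{C,m}$, the invariant $Z(u)=-Z(v)$, by interpreting the column's vertical-edge set as a subgraph of the bipartite cycle $C_m$; combined with Lemma~\ref{lem:1} this gives the bipartition at once, and as a by-product it re-derives the bipartiteness of the linear components rather than assuming it. The paper's argument is shorter given that the companion result is available; yours buys independence from that result and a cleaner structural statement. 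One remark: your third paragraph can be compressed considerably, since the number of $0$'s among $\{v_j,u_j\}$ equals the vertical degree $\deg(j)$ of position $j$, so $Z(v)+Z(u)=\sum_{j}(-1)^{j+1}\deg(j)=\sum_{\{j,k\}\in E}\bigl((-1)^{j+1}+(-1)^{k+1}\bigr)=0$ directly, because every edge of $C_m$ (for $m$ even) joins positions of opposite parity; the component-by-component bookkeeping with isolated $e$-vertices and odd paths, while correct (I checked the identities $b_o-b_e=e_o-e_e$ and $h_o-h_e=-2(e_o-e_e)$ do hold), is an unnecessary detour. Your treatment of the second claim (same component via Theorem~\ref{conj:1}, different classes via $Z(\overline{v})=-Z(v)$ for even $m$) matches the paper's in substance.
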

\begin{proof}
 In order to prove that  the component ${\cal B}^{*(s)}_{C,m}$ ($ 1 \leq s \leq \ds  \ds \frac{m}{2} $) is the  bipartite digraph $(R_{m}^{(s)},G_{m}^{(s)})$,  it is sufficient to prove that every  arc from the set $E({\cal B}^{*(s)}_{C,m}) \backslash E({\cal B}^{*(s)}_{L,m})$  has the  vertices in different colors (red and green).
 Consider such an arc $vw \in E({\cal B}^{*(s)}_{C,m})$. Then, there exists $\alpha = \alpha_1  \alpha_2 \ldots  \alpha_m  \in V({\cal D}_{C,m})$ for which $i(\alpha) = v= v_1v_2 \ldots v_m$,   $o(\alpha) = w=w_1w_2 \ldots w_m$,  $\alpha_1 \in \{ b, c, f \}$ and $\alpha_m \in \{ a, b, d \}$.
 Since $\alpha \neq b^m$ ($o(b^m) = 0^m  \in V({\cal A}^{*}_{C,m})$), then there exists \
  $\ds J = \min\{j \in N \; \mid \;  1 \leq j \leq m-1 \wedge (v_j = 1  \vee w_j = 1) \} .$ \
  This implies that  there exists  an  arc $\rho^J(v) \rightarrow \rho^J(w)$ in  ${\cal D}^*_{L,m}$
  where  $\rho^j(x_1x_2 \ldots x_m) \stackrel{\rm def}{=} x_{j+1}x_{j+2} \ldots x_mx_1x_2 \ldots x_j$ for any  word $x_1x_2 \ldots x_m$ of length $m$ and $1 \leq j \leq m-1$.
  Namely,   $\rho^J(\alpha) \in V({\cal D}_{L,m})$, $ i(\rho^J(\alpha)) = \rho^J(v)$ and   $o(\rho^J(\alpha)) = \rho^J(w).$
 Note that for an arbitrary binary word $x=x_1x_2 \ldots x_m$ and $1 \leq j \leq m -1$, we have that $Z(\rho^j(x)) = (-1)^j  Z(x)$.
Consequently,  both $v$ and $w$  belong to the same set $S_m^{(s)}$ ($0 \leq s \leq m/2$) which  contains $\rho^J(v)$ and $ \rho^J(w)$. Since
 the vertices $\rho^J(v)$ and $ \rho^J(w)$ belong to the different sets  $R_m^{(s)}$ and  $G_m^{(s)}$ (they belong to different classes of  the component ${\cal B}^{*(s)}_{L,m}$ of  ${\cal D}_{L,m}^*$),  it is valid for $v$ and $w$, too.
\\
The second statement of this lemma is a simple consequence from the linear case (Theorem~\ref{conj:1}). $\Box$
\end{proof}

\begin{lem}  \label{lem:3}
For each odd  $m\geq 1 $,  the digraph  ${\cal D}^{*}_{C,m}$ has exactly two  components  ${\cal A}^*_{C,m}$ and ${\cal N}^*_m $ (${\cal D}^{*}_{C,m} = {\cal A}^*_{C,m}   \cup {\cal N}^*_{m} $)  which are isomorphic and  each of them  has $2^{m-1}$ vertices.
 \end{lem}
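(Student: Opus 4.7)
The plan follows the template of Lemmas~\ref{lem:1} and~\ref{lem:2}: first fix the vertex set and use the weight-parity obstruction to get a lower bound on the number of components, then merge the linear components inside each weight-parity class using explicit circular-only arcs (upper bound), and finally exhibit an automorphism of ${\cal D}^*_{C,m}$ swapping the two components.

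For the lower bound: since $V({\cal D}^*_{C,m}) = \{0,1\}^m$ and adjacent vertices share the parity of their Hamming weight, ${\cal D}^*_{C,m}$ has at least two components --- the one containing $1^m$ (odd-weight class) and the one containing $0^m$ (even-weight class), each of cardinality $2^{m-1}$. It therefore suffices to show that each weight-parity class is connected in ${\cal D}^*_{C,m}$.

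For the matching upper bound I would invoke Theorem~\ref{conj:1}: for odd $m$ the vertices of ${\cal D}^*_{L,m}$ partition into the linear components $S_m^{(s)}$ for $s = 0, \ldots, (m-1)/2$, and $V({\cal D}^*_{C,m})$ adds the single extra vertex $0(10)^{(m-1)/2}$. A short arithmetic check using $w(x) \equiv m + Z(x) \pmod{2}$ shows that for odd $m$ every $x \in S_m^{(s)}$ has odd weight when $s$ is even and even weight when $s$ is odd, while the extra vertex has weight $(m-1)/2$ and hence lies in the odd-weight class iff $m \equiv 3 \pmod{4}$. To merge these pieces inside ${\cal D}^*_{C,m}$ I would exhibit, for each same-parity pair $(s,s+2)$, an explicit cyclic alpha-word whose inlet and outlet words lie in $S_m^{(s)}$ and $S_m^{(s+2)}$ respectively, producing the needed arc in $E({\cal D}^*_{C,m}) \setminus E({\cal D}^*_{L,m})$, together with one arc linking the extra vertex to an adjacent linear component. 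Since the cyclic rotation $\rho$ is an automorphism of ${\cal D}^*_{C,m}$, a single merging arc per pair is enough.

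For the isomorphism I would use the involution $\psi$ on the alphabet defined by $\psi(a) = f$, $\psi(b) = e$, $\psi(c) = d$, $\psi(d) = c$, $\psi(e) = b$, $\psi(f) = a$. A routine case-check on the $18$ arcs of ${\cal D}_{ud}$ shows $\psi$ preserves all of them; the definitions of $i(\cdot)$ and $o(\cdot)$ give $i(\psi(\alpha)) = 1 - i(\alpha)$ and $o(\psi(\alpha)) = 1 - o(\alpha)$ at every letter, so $\psi$ also preserves the ${\cal D}_{lr}$-condition. Consequently, applying $\psi$ coordinatewise sends cyclic alpha-words to cyclic alpha-words and descends to the bit-complementation automorphism $v \mapsto 1^m \oplus v$ of ${\cal D}^*_{C,m}$. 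For odd $m$ this map swaps the two weight-parity classes and hence carries ${\cal A}^*_{C,m}$ isomorphically onto ${\cal N}^*_m$.

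The main obstacle will be the upper-bound step: the explicit construction of merging arcs splits into two cases according to $m \bmod 4$ (because the weight-parity of the extra vertex flips), and in each case one must write down concrete cyclic alpha-words whose $i$- and $o$-words bridge successive linear components of the same parity class. The lower bound and the involution $\psi$ then reduce to parity bookkeeping and the standard arc verification on ${\cal D}_{ud}$ and ${\cal D}_{lr}$.
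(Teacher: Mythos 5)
Your proposal is correct and follows essentially the same route as the paper: a weight-parity (equivalently, parity of $Z$) obstruction for the lower bound, explicit circular-only arcs merging the linear components $S_m^{(s)}$ and $S_m^{(s+2)}$ for the upper bound, and the letter involution $a \leftrightarrow f$, $b \leftrightarrow e$, $c \leftrightarrow d$ inducing bit-complementation for the isomorphism. The only piece you defer --- the explicit merging arcs --- is handled in the paper by the single uniform alpha-word $\alpha = f(af)^sab^{m-2s-2}$, which gives $i(\alpha)=L_m^{(s)}$ and $o(\alpha)=Q_m^{(s+2)}$ for all $0 \leq s \leq \lfloor m/2\rfloor -1$ and automatically absorbs the extra vertex $(01)^{\lfloor m/2\rfloor}0$ at the top of the chain, so the $m \bmod 4$ case split you anticipate is not needed.
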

\begin{proof}
 Recall that when $m$ is odd, the court ladies
 $L_m^{(s)}=(10)^{s+1}0^{m-2s-2} \in G_{m}^{(s)} \subseteq  S_{m}^{(s)}$ ($1 \leq s \leq \lfloor m/2  \rfloor -1$) and  $L_m^{(0)}=(10)0^{m-2} \in S_{m}^{(0)}$ fulfill  $ Z(L_m^{(s)}) = -s$ ($0 \leq s \leq \lfloor m/2  \rfloor -1$).  For the queens $Q_m^{(s)}= (01)^{(s-1)}0^{m-2s+2} \in R_{m}^{(s)} \subseteq  S_{m}^{(s)}$ ($1 \leq s \leq \lfloor m/2  \rfloor $) we have $ Z(Q_m^{(s)}) = s$. Additionally, the queen $Q_m^{( \lfloor m/2 \rfloor+1)} = (01)^{\lfloor m/2  \rfloor} 0 \notin V({\cal D}^{*}_{L,m})$  now belongs to  $V({\cal D}^{*}_{C,m})$ and $ \ds Z(Q_m^{( \lfloor m/2 \rfloor+1)}) = \lfloor m/2 \rfloor+1$.
The digraphs induced by the sets $S_m^{(s)}$, $s=0,1,2, \ldots, \lfloor m/2  \rfloor$ in ${\cal D}^{*}_{L,m}$ are  connected digraphs. Clearly,
they are subdigraphs of ${\cal D}^{*}_{C,m}$. We prove that all the vertices $v \in V({\cal D}^{*}_{C,m})$ with even $Z(v)$ belong to
${\cal A}^{*}_{C,m}$ - the component  of ${\cal D}^{*}_{C,m}$ containing $1^m$ ($Z(1^m) =0$)
while the ones  with odd $Z(v)$ to ${\cal N}^{*}_{m}$ - the component  of ${\cal D}^{*}_{C,m}$ containing $0^m$ ($Z(0^m) =1$ and the vertices $1^m$ and $0^m$ are not connected because their numbers of 1's have opposite parity).

For this purpose, note that  the number of $1$'s in the  words $L_m^{(s)}$ and $Q_m^{(s+2)}$ ($ 0 \leq s \leq \lfloor m/2  \rfloor -1$) is equal ($s+1$). They are directly connected by an arc in ${\cal D}^{*}_{C,m}$ because there exists the alpha word $\alpha = f (af)^sab^{m-2s-2} \in V({\cal D}_{C,m})$ for which $i(\alpha) = L_m^{(s)}$ and
$o(\alpha) = Q_m^{(s+2)}$.
In this way we obtain that $V({\cal A}^{*}_{m})$ and $V({\cal N}^{*}_{m})$ consist of all the circular binary words $v$ of length $m$ for which $Z(v)$ is even and odd, respectively.

The isomorphism between  ${\cal A}^{*}_{C,m}$ and ${\cal N}^{*}_{m}$ is the simple consequence of the isomorphism  between  ${\cal A}_{C,m}$ - the component of  ${\cal D}_{C,m}$
containing $e^m$ and ${\cal N}_{m}$  - the component of  ${\cal D}_{C,m}$ containing $b^m$.
Let us prove  the latter.
For this sake, we define the function $f: \{ a, b, c, d, e,f \}  \longrightarrow \{ a, b, c, d, e,f \} $
with  $\ds f: \left(  \begin{array}{c} a \; \;   b \; \;  c \; \;  d \; \;  e \; \;  f \\   f \; \;   e \; \; d \; \;  c \; \;  b \; \;  a   \end{array}  \right) .$
With direct verification
 we  conclude that  $f$ is automorphism of the digraph  ${\cal D}_{ud}$. Consequently,
 for any word  $\alpha = \alpha_1 \alpha_2  \ldots \alpha_m  \in V({\cal D}_{C,m})$, the word $f(\alpha_1)f(\alpha_2)  \ldots f(\alpha_m)$
 belongs to $V({\cal D}_{C,m})$, too.

 Now, we define the function $ \ds  F : V({\cal D}_{C,m}) \longrightarrow  V({\cal D}_{C,m})$ by $ F(\alpha) = \beta$  if and only if $\beta_i = f(\alpha_i)$, for all $i=1,2, \ldots, m$ where $\alpha = \alpha_1 \alpha_2  \ldots \alpha_m, \beta = \beta_1, \beta_2, \ldots , \beta_m \in V({\cal D}_{C,m})$.
\\ Obviously, $F$ is a  bijection and involution because the same is valid for the function $f$.
The function $f$ is also  automorphism of the digraph  ${\cal D}_{lr}$, which implies that
\bc $(\forall \alpha,  \beta   \in V({\cal D}_{C,m}))( \alpha \rightarrow \beta  \Leftrightarrow F(\alpha) \rightarrow F(\beta) ). $\ec
 Since $F(e^m)= b^m$, we conclude that ${\cal A}_{C,m}$ and ${\cal N}_{m}$ are isomorphic. Consequently, ${\cal A}^*_{C,m}$ and ${\cal N}^*_{m}$ are isomorphic.
$\Box$
\end{proof}
 Additionally, note that if  $ F(\alpha) = \beta$, where  $  o(\alpha)= v = v_1v_2 \ldots v_m$ and   $ o(\beta)= w = w_1w_2 \ldots w_m$,  then $v_i = 0$ if and only if $w_i=1$, for all $i=1,2, \ldots , m$. In this way, the isomorphism between ${\cal A}^*_{C,m}$ and ${\cal N}^*_{m}$ is determined with
 the function $c: V({\cal D}^{*}_{C,m})  \longrightarrow V({\cal D}^{*}_{C,m})$ defined by
 $c(v) =  w = w_1w_2 \ldots w_m$, where $v = v_1v_2 \ldots v_m$ and $\ds w_i = \left\{  \begin{array}{cc} 0, &  \mbox{ if }   v_i = 1 \\ 1, &  \mbox{ if }   v_i = 0
  \end{array}  \right.$. Clearly, $c(c(v)) = v$. We can say that  the words $v$ and $w$ are mutually complementary.

 \vspace*{5mm}

Lemma~\ref{lem:1}, Lemma~\ref{lem:2} and Lemma~\ref{lem:3} complete the proof of Theorem~\ref{thm:hip}.

% ===============
% Acknowledgement
% ===============

\vspace{1.5cc}
\begin{center}
{\bf ACKNOWLEDGEMENTS}
\end{center}

%The authors are indebted to the anonymous referees  for their valuable
%suggestions and helpful comments which  improved the clarity of the presentation.

This work  was  supported by  the Ministry of
Science, Technological Development and Innovation of the   Republic of Serbia \ (Grants $ 451-03-9/2022-14/200125$, $451-03-68/2022-14/200156$)
and the Project of the Department for fundamental disciplines in technology, Faculty of Technical Sciences, University of Novi Sad "Application of general disciplines in technical and IT sciences".

% ==========
% References
% ==========

%\vspace*{0.5cm}
%Fill author(s) affiliation(s), address(es) and emails here:

\noindent Faculty of Technical Sciences,
  University of Novi Sad,
  Novi Sad, Serbia\\
     E-mail: jelenadjokic@uns.ac.rs  \\
E-mail: ksenija@uns.ac.rs % (corresponding author)

\vspace*{0.5cm}

 \noindent
  Dept.\ of Math.\ \&\ Info.,
  Faculty of Science,
  University of Novi Sad,
  Novi Sad, Serbia \\
   E-mail: olga.bodroza-pantic@dmi.uns.ac.rs

\end{document}